\newtheorem{lemma1}{}[section]
\newenvironment{theorem}{\begin{lemma1}{\bf Theorem.}}{\end{lemma1}}
\newenvironment{proposition}{\begin{lemma1}{\bf Proposition.}}{\end{lemma1}}
\newenvironment{corollary}{\begin{lemma1}{\bf Corollary.}}{\end{lemma1}}
\newenvironment{conjecture}{\begin {lemma1}{\bf Conjecture.}}{\end{lemma1}}
\newenvironment{remark*}{{\bf Remark.}}{}
\newenvironment{example*}{{\bf Example.}}{}
\newcommand{\R}{\ensuremath{\mathbb{R}}}
\newcommand{\Q}{\ensuremath{\mathbb{Q}}}
\newcommand{\Z}{\ensuremath{\mathbb{Z}}}
\newcommand{\holom}[3]{\ensuremath{#1:#2  \rightarrow #3}}
\newcommand\sO{{\mathcal O}}
\title{Anticanonical system of Fano fivefolds} 
\date{\today}
\author{Andreas H\"oring \and Robert \'Smiech}
\address{Andreas H\"oring, Universit\'e C\^ote d'Azur, CNRS, LJAD, France}
\email{Andreas.Hoering@unice.fr}
\address{Robert \'Smiech, University of Warsaw, Faculty of Mathematics, Informatics and Mechanics, ul. Banacha 2, 02-097 Warszawa}
\email{r.smiech@mimuw.edu.pl}
\begin{document}

\begin{abstract} 
	We show that any Fano fivefold with canonical Gorenstein singularities has an effective anticanonical divisor. Moreover, if a general element of the anticanonical system is reduced, then it has canonical singularities.
\end{abstract}

\maketitle

%\tableofcontents

\vspace{-1ex}

\section{Introduction}

The anticanonical linear system is a fundamental tool in the classification theory of Fano varieties, but unfortunately not much is known in higher dimension. In this note we clarify the situation in dimension five:

\begin{theorem}  \label{theoremmain}
	Let $X$ be a normal projective variety of dimension five with canonical Gorenstein singularities.
	Suppose that $X$ is Fano, i.e. the anticanonical divisor $-K_X$ is
	ample.
	
	\begin{enumerate}
		\item Then one has $h^0(X,-K_X) \ge 4$.
		\item Assume that a general element of $D \in |-K_X|$ is reduced, then $D$ has canonical singularities.
	\end{enumerate}
	
\end{theorem}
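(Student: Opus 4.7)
The plan is to treat the two parts in order, with (2) depending on (1).

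\textbf{Part (1): vanishing plus Riemann--Roch.} Since $X$ has canonical, hence rational, Gorenstein singularities and $-K_X$ is ample, Kawamata--Viehweg vanishing yields $H^i(X, -K_X) = 0$ for $i > 0$, so $h^0(X,-K_X) = \chi(X,-K_X)$ and it suffices to bound the Euler characteristic. I would compute $\chi(X,-K_X) = \chi(\tilde X, \pi^*(-K_X))$ on a resolution $\pi: \tilde X \to X$ (the identification uses $R^i\pi_* \mathcal{O}_{\tilde X} = 0$ by rationality) via Hirzebruch--Riemann--Roch. Writing $\pi^*(-K_X) = -K_{\tilde X} + E$ with $E = \sum a_i E_i$ effective (canonicity of $X$), this expresses $\chi$ as a polynomial in the Chern numbers $c_1^5, c_1^3 c_2, c_1 c_2^2, c_1^2 c_3, c_1 c_4$ of $\tilde X$ together with intersection numbers involving the $E_i$. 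To bound the result from below I would invoke Miyaoka's generic semi-positivity, which gives $c_2(\tilde X) \cdot H^3 \geq 0$ for the nef and big $H = \pi^*(-K_X)$, combined with ampleness of $-K_X$ and the effectivity of $E$.

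\textbf{Part (2): inversion of adjunction and the pair $(X,D)$.} Fix a general reduced $D \in |-K_X|$ and take a log resolution $\pi: \tilde X \to X$ of $(X, D)$; since $D$ is reduced, the strict transform $\tilde D$ is smooth, so $\tilde D \to D$ is a genuine resolution. Using $K_X + D \sim 0$, adjunction and the standard discrepancy computation give
$$K_{\tilde D} = \sum (a_i - m_i)\, E_i|_{\tilde D},$$
where $K_{\tilde X} = \pi^* K_X + \sum a_i E_i$ and $\pi^* D = \tilde D + \sum m_i E_i$. Hence $D$ has canonical singularities exactly when the pair $(X,D)$ is canonical, i.e.\ $a_i \geq m_i$ for every $E_i$ whose center meets $D$. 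To control the $m_i$ I would use part (1): since $\dim|-K_X| \geq 3$, a Bertini-with-base-locus argument bounds $m_i$ by (a multiple of) the dimension of the restricted linear subsystem $|-K_X|_{\pi(E_i)}|$, and combined with $a_i \geq 0$ from canonicity of $X$ this should handle every exceptional divisor whose center does not sit inside $\Bs |-K_X| \cap \sing X$.

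\textbf{Main obstacle.} The hard case is centers $Z \subset \Bs|-K_X| \cap \sing X$. Over such $Z$ the discrepancy $a_i$ can degenerate to $0$ while $m_i$ is still positive, producing candidate non-canonical valuations. Ruling them out requires either a case-by-case analysis of the possible configurations of base locus and singular locus on a canonical Gorenstein Fano fivefold, or a Koll\'ar--Shokurov tie-breaking argument: perturb to $(X, \lambda D)$ with $\lambda$ the log canonical threshold, apply connectedness of the non-klt locus, and extract a numerical contradiction from the Fano intersection data $(-K_X)^5$ and $(-K_X)^3 \cdot c_2$. Adapting this machinery to the specific arithmetic of dimension five is where most of the real work lies, and is the step I expect to be the main obstacle.
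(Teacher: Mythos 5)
Your overall architecture (Kawamata--Viehweg vanishing plus Riemann--Roch for (1), a discrepancy/lct analysis for (2)) points in the right direction, but both parts have genuine gaps. For (1): in dimension five the Hirzebruch--Riemann--Roch expansion of $\chi(\tilde X, t\,\pi^*(-K_X))$ has $t^2$- and $t^1$-coefficients involving $\mathrm{td}_3$ and $\mathrm{td}_4$ of the resolution, i.e.\ Chern numbers containing $c_3$ and $c_4$ (and the contributions of the exceptional divisors), and there is no positivity statement for any of these; semipositivity of $c_2$ alone cannot close the estimate. Moreover Miyaoka's generic semipositivity is a theorem about \emph{non-uniruled} varieties, while a Fano fivefold is uniruled; the inequality actually needed, $c_2(TX)\cdot(-K_X)^3\ge 0$, is Ou's theorem for varieties with nef anticanonical divisor. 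The paper avoids the $c_3,c_4$ problem entirely: it first passes via BCHM to a \emph{crepant} terminal model (so $X$ is smooth in codimension two, $c_2$ is defined, and $-K_X=i_XL$ is still nef and big), and then exploits Serre duality, which forces the Hilbert polynomial $p(t)=\chi(X,tL)$ to satisfy $p(t)=-p(-t-i_X)$. This functional equation determines all lower coefficients in terms of $L^5$, $c_2\cdot L^3$ and $\chi(\sO_X)=1$, giving the closed formula $\chi(X,-K_X)=\tfrac{(-K_X)^5}{4!}+\tfrac{c_2(TX)\cdot(-K_X)^3}{4!}+3\ge 4$. Note that on a full resolution $K_{\tilde X}=\pi^*K_X+E$ with $E\ne 0$, so the functional equation fails there; the crepant terminal model is essential, and without this device (or an equivalent one) your computation cannot be completed.

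For (2) you correctly reduce, via inversion of adjunction, to a statement about the pair $(X,D)$ (for Gorenstein $D$ klt is the same as canonical, since discrepancies are integers), but you explicitly leave the main case open, and the two remedies you propose are not viable: multiplicities $m_i$ of a general member along base-locus components are not bounded by the dimension of the linear system, and a case-by-case analysis of $\Bs|-K_X|\cap\sing X$ on an arbitrary canonical Gorenstein Fano fivefold is hopeless. The paper's argument is the lct/tie-breaking idea you gesture at, carried to completion: take $0<c\le 1$ with $(X,cD)$ properly lc, extract a \emph{minimal} lc centre $W$ contained in $\Bs|-K_X|$ (Ambro); since $-K_X-(K_X+cD)=(2-c)(-K_X)$ is ample, Fujino's extension theorem makes $H^0(X,-K_X)\to H^0(W,-K_X|_W)$ surjective, hence $H^0(W,-K_X|_W)=0$; Kawamata subadjunction exhibits $(W,\Delta_W)$ as a klt log Fano or log Calabi--Yau pair; reducedness of $D$ is used precisely to exclude components of $D$ as lc centres and thus force $\dim W\le 3$; and effective nonvanishing in dimension $\le 3$ (Kawamata for surfaces, plus the paper's own Proposition \ref{propositionklt} --- a new case of the Ambro--Kawamata conjecture for threefolds with generically nef $-K_X$ --- for $\dim W=3$) produces a section of $-K_X|_W$, a contradiction. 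These four ingredients (extension theorem, subadjunction, the dimension bound from reducedness, and the three-dimensional nonvanishing result) are exactly what is missing from your sketch.
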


We expect that Fano varieties with canonical singularities always admit
an effective anticanonical divisor with canonical singularities.
In fact the arguments in this paper show that this would be a consequence
of the effective nonvanishing conjecture, attributed to Kawamata and Ambro:

\begin{conjecture} \cite[Conj. 2.1]{Kaw00} \label{conjecturenonv}
	Let $X$ be a complete normal variety, and $\Delta$ an effective $\R$-divisor on $X$ such that $(X,\Delta)$ is klt. Let $L$ be a nef Cartier divisor on $X$ such that $L -(K_X+\Delta)$ is nef and big. Then $H^0(X,L)\ne 0$.
\end{conjecture}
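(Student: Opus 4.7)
The plan is to combine Kawamata--Viehweg vanishing with an induction on dimension driven by the minimal model program. Since $(X,\Delta)$ is klt and $L-(K_X+\Delta)$ is nef and big, the Kawamata--Viehweg vanishing theorem gives $H^i(X,L)=0$ for every $i>0$, so
\[ h^0(X,L)=\chi(X,L), \]
and the task reduces to proving $\chi(X,L)\geq 1$.

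The central idea is then to run the $(K_X+\Delta)$-MMP with scaling by $L$. Because $L$ is nef and $L-(K_X+\Delta)$ is nef and big, the scaling stays positive at every step, and one checks that each divisorial contraction and flip preserves the hypothesis that $L$ remains nef and $L-(K_X+\Delta)$ remains nef and big (the latter is a numerical condition, invariant under isomorphism in codimension one, while $L$-triviality on the contracted ray allows $L$ to descend). The program terminates either at a minimal model, where $K_{X'}+\Delta'$ is nef, or at a Mori fibre space $\pi\colon X'\to Y$ with $\dim Y<\dim X$.

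In the Mori-fibre case the restriction $L'|_F$ to a general fibre $F$ satisfies the hypotheses of the conjecture in smaller dimension, so by induction $h^0(F,L'|_F)>0$ and $\pi_*\Osheaf_{X'}(L')$ is a nonzero torsion-free sheaf on $Y$. Using Ambro's canonical bundle formula one tries to produce a klt pair $(Y,\Delta_Y)$ and a Cartier divisor $L_Y$ on $Y$ (a determinant/rank twist of the pushforward) with $L_Y-(K_Y+\Delta_Y)$ still nef and big, then conclude by induction on $\dim Y$. In the minimal-model case, $L'$ is nef and $L'-(K_{X'}+\Delta')$ is nef and big, and one attempts a direct Hirzebruch--Riemann--Roch lower bound on $\chi(X',L')$ exploiting the positivity of intersection numbers coming from the nef-and-big hypothesis.

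The main obstacle, and the reason the conjecture is open beyond low dimensions, is the Mori-fibre reduction: pushing forward a section from $F$ controls only the relative direction, and producing a Cartier divisor on $Y$ with the requisite nef-and-big property requires a strong version of the canonical bundle formula in which the moduli $b$-divisor has controlled (ideally nef or effective) positivity. This is itself a major open problem, morally equivalent to the conjecture we are trying to prove. A serious attack therefore has to either (i) circumvent the MMP entirely by a Chern-class / asymptotic Riemann--Roch argument giving a direct lower bound on $\chi(X,L)$ at $m=1$ (not merely for $m\gg 0$, which is immediate from bigness), or (ii) restrict to cases where the canonical bundle formula is understood, such as $\dim X\leq 3$, fibrations with rationally connected fibres, or pairs in which one has additional positivity on the discriminant locus.
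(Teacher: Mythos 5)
The statement you are trying to prove is the effective nonvanishing conjecture of Kawamata and Ambro; it is stated in the paper as a \emph{conjecture}, cited from Kawamata, and the paper does not prove it (nor does anyone else --- it is open in dimension $\geq 4$). Your text is accordingly not a proof but a research programme, and to your credit you say so explicitly: the Mori-fibre-space step requires a canonical bundle formula with positivity of the moduli part that you yourself identify as ``morally equivalent to the conjecture we are trying to prove.'' That is a genuine, fatal gap, not a detail to be filled in. A second, smaller gap sits in your minimal-model endgame: ``a direct Hirzebruch--Riemann--Roch lower bound on $\chi(X',L')$'' at $m=1$ is precisely the hard content of the conjecture, and nef-and-bigness of $L'-(K_{X'}+\Delta')$ does not by itself control the signs of the lower-order Chern-number contributions to $\chi(X',L')$. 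Also note that running the $(K_X+\Delta)$-MMP with scaling of $L$ need not keep $L$ Cartier or nef on the later models in general; $L$-triviality on the contracted ray is not automatic and would have to be arranged.

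What the paper actually does toward this conjecture is an instance of your option (i), but only in dimension three and under an extra hypothesis: Proposition~\ref{propositionklt} assumes $-K_X$ is generically nef (i.e.\ $-K_X\cdot H^2\geq 0$ for nef $H$), reduces to $D$ nef and big via \cite[Thm 3.1]{Kaw00}, establishes $\chi(\sO_X)\geq 1$ by uniruledness arguments killing $h^1$ and $h^3$, and then exploits the identity $\chi(X,D)+\chi(X,-D)=\tfrac{D^3}{3}(a+c)$ for the Hilbert polynomial $\chi(X,tD)=\tfrac{D^3}{3!}(t^3+at^2+bt+c)$, together with $\chi(X,-D)\leq 0$ from Kawamata--Viehweg, to get $h^0(X,D)=\chi(X,D)>0$ from $a\geq 0$ and $c>0$. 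This is exactly the kind of sign control on lower-order Riemann--Roch terms that your sketch lacks, and it is available only because of the added generic-nefness hypothesis and the low dimension. If your goal is to reproduce the paper's contribution, you should aim at that restricted statement rather than the full conjecture.
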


Recall that the index $i_X$ of Fano variety is the biggest integer such that in the Picard group we have an equality $-K_X = i_X L$ for some ample Cartier divisor $L$. The divisor $L$ is called the fundamental divisor. Our theorem has the following consequence:

\begin{corollary} \label{corollarymain}
	Let $X$ be a normal projective variety of dimension five with canonical Gorenstein singularities.
	Suppose that $X$ is Fano, i.e. the anticanonical divisor $-K_X$
	ample. Then the fundamental system $|L|$ is not empty, in particular
	the anticanonical system $|-K_X|$ is not empty.
\end{corollary}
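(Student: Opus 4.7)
The plan is to reduce the corollary to Theorem \ref{theoremmain}(1) via a Hilbert polynomial analysis. If $i_X = 1$, then $L = -K_X$ and $h^0(X,L) \geq 4$ is precisely the content of the theorem, so I would concentrate on $i_X \geq 2$.

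Set $P(t) := \chi(X, tL)$. Since $X$ has canonical (hence rational) Gorenstein singularities, $P$ is a polynomial in $t$ of degree $5$. The divisor $kL - K_X = (k+i_X)L$ is ample for every integer $k > -i_X$, so Kawamata--Viehweg vanishing gives $h^i(X, kL) = 0$ for $i \geq 1$ in that range; in particular $h^0(X, kL) = P(k)$ for $k \geq 0$, and one extracts the constraints
\[
P(0) = \chi(\mathcal{O}_X) = 1, \qquad P(-k) = 0 \text{ for } 1 \leq k \leq i_X - 1,
\]
\[
P(-i_X) = \chi(X, K_X) = -1, \qquad P(i_X) = h^0(X, -K_X) \geq 4,
\]
where the intermediate zeros use that $kL$ is ample (so $-kL$ is not effective), $P(-i_X) = -1$ comes from Serre duality, and the final inequality is Theorem \ref{theoremmain}(1). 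Serre duality further provides the functional equation $P(t) = -P(-i_X - t)$.

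Consequently $P(t) = (t+1)(t+2)\cdots(t+i_X-1)\, R(t)$ with $R$ of degree $6 - i_X$ inheriting a parity symmetry about $t = -i_X/2$. For each $i_X \in \{3,4,5,6\}$ the remaining parameters of $R$ are pinned down by $P(0) = 1$, $P(-i_X) = -1$, and $P(i_X) \geq 4$ tightly enough that a direct computation, supplemented by the integrality $P(\mathbb{Z}) \subset \mathbb{Z}$, gives $P(1) \geq 1$. For instance, $i_X = 6$ forces $P(t) = \binom{t+5}{5}$ so $P(1) = 6$; the cases $i_X = 5, 4, 3$ yield analogous explicit positive integer lower bounds for $P(1)$.

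The main obstacle is the case $i_X = 2$. Here $R$ has degree $4$ and is symmetric about $t = -1$, so after imposing $P(0) = 1$ two real parameters remain, and the single inequality $P(2) \geq 4$ does not on its own force $P(1) \geq 1$. A closer look shows that $P(1) = 0$ can only happen when $L^5 \geq 9$; I expect to close this residual range by combining the non-negativity $P(k) \geq 0$ for all $k \geq 1$ with the functional equation and integrality, or if necessary by invoking the (partial) classification of Fano fivefolds of coindex three or Birkar's boundedness to bound $L^5$.
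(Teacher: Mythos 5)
Your reduction to the index and the Hilbert-polynomial analysis is a genuinely different route from the paper, which disposes of all cases $i_X\geq 2$ by citation: Fujita's classification for $i_X\geq 5$, Iskovskikh--Fujita for $i_X\geq 3$, and Floris \cite[Prop.~3.2]{Flo13} for $i_X=2$, leaving only $i_X=1$ to the theorem. Your computations for $i_X\in\{3,4,5,6\}$ do work (indeed, since the leading coefficient of $P$ is $L^5/5!>0$, the symmetry alone already gives $P(1)=L^5+4$ for $i_X=4$ and $P(1)=L^5/2+5$ for $i_X=3$, without even invoking the theorem). But the case $i_X=2$ is a genuine gap, and it is exactly the case that required new input in the literature. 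Writing $P(t)=(t+1)\bigl(a(t+1)^4+b(t+1)^2+c\bigr)$ with $a+b+c=1$ and $a=L^5/120$, one finds $P(1)=30a+6b+2$ and $P(2)=240a+24b+3$, so $P(2)\geq 4$ only yields $P(1)\geq -\tfrac{L^5}{4}+\tfrac{9}{4}$; the constraints $P(k)\geq 0$ for $k\geq 3$ are automatically satisfied once $a$ is large, and the functional equation gives nothing new at negative integers ($P(-3)=-P(1)$, etc.). So the first remedy you propose provably cannot close the range $L^5\geq 9$, which is far from empty (e.g.\ $\PP^1\times\PP^1\times\PP^3$ has $i_X=2$ and $L^5=160$), and neither boundedness nor a classification of coindex-three fivefolds will hand you the explicit bound $L^5\leq 8$ you would need.

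The gap can be closed, however, with tools already in the paper: pass to a crepant terminal model via \cite[Cor 1.4.4]{BCHM06} (so that $X$ is smooth in codimension two and $c_2(TX)$ is defined), and use the refined Riemann--Roch expansion \eqref{eq:HRR2} together with the symmetry, i.e.\ formula \eqref{eq:evaluation}, which for $i_X=2$ reads
\[
P(1)=\frac{L^5+c_2(TX)\cdot L^3}{12}+2 .
\]
Ou's inequality $c_2(TX)\cdot(-K_X)^3\geq 0$ \cite[Cor.~0.5]{Ou17} then gives $P(1)>2$. Alternatively, simply quote Floris for $i_X=2$ as the authors do. As it stands, your argument proves the corollary for all indices except the one nontrivial intermediate case.
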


Indeed by Fujita (\cite{Fuj75}, \cite[Ch.I, Thm 5.10]{Fuj90}) 
	a Fano variety of dimension $n$ and index $i_X$ at least $n$ is a projective
	space, quadric, scroll over $\mathbb{P}^1$ or a Veronese surface, so the statement is trivial in this case.
The case of index $i_X \geq n-2$ was solved by Iskovskikh and Fujita (see \cite[Cor. 2.1.14]{IP99}, and Mella's paper for the properties of the fundamental divisor \cite{Me}). 
In the case of index $i_X=n-3$ a classification is currently out of reach, but Floris \cite[Prop. 3.2]{Flo13} showed nonvanishing in dimension five.
If $X$ is smooth and $i_X=n-3$, the combination of \cite{Liu17} and \cite{Flo13} yields the existence of effective fundamental divisors in any dimension. Thus in dimension five the last open case was $i_X=1$
which is covered by our theorem.

If $X$ has log-canonical singularities, we obtain a non-vanishing result:

\begin{proposition}  \label{propositionmain}
Let $X$ be a normal projective variety of dimension at most five with log-canonical Gorenstein singularities.
Suppose that $X$ is Fano, i.e. the anticanonical divisor $-K_X$ is
ample. Then one has $H^0(X,-K_X) \neq  0$.
\end{proposition}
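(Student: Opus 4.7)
The plan is to reduce the statement to the canonical case, which is handled by Theorem~\ref{theoremmain} in dimension five and by the classical results for canonical Gorenstein Fano varieties cited in the introduction (Fujita, Iskovskikh--Mella, Floris) in dimensions at most four. If $X$ has canonical singularities there is nothing new to prove, so I assume $X$ is strictly log canonical. Since $X$ is normal it is smooth in codimension one, hence every log canonical centre of $(X,0)$ has codimension at least two, and a minimal lc centre $W\subseteq X$ satisfies $\dim W\leq\dim X-2\leq 3$.

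The first main tool I would invoke is the Ambro--Fujino vanishing theorem for log canonical pairs: after a standard tie-breaking by a general $A\in|-mK_X|$ for $m\gg 0$, passing from $(X,0)$ to $(X,\varepsilon A)$ so that $W$ becomes the unique non-klt centre and $\sI_W$ agrees with the non-klt ideal, the positivity $(-K_X)-(K_X+\varepsilon A)=-2K_X-\varepsilon A$ being ample yields
\[H^1(X,\sI_W\otimes\sO_X(-K_X))=0,\quad\text{hence}\quad H^0(X,\sO_X(-K_X))\twoheadrightarrow H^0(W,\sO_X(-K_X)|_W).\]
The second tool is Kawamata's subadjunction at a minimal log canonical centre: $W$ is normal, and there exists an effective $\bQ$-divisor $\Delta_W$ on $W$ such that $(W,\Delta_W)$ is klt and $K_X|_W\sim_\bQ K_W+\Delta_W$. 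Setting $L:=\sO_X(-K_X)|_W$, which is an ample Cartier divisor on $W$, we obtain $L-(K_W+\Delta_W)\sim_\bQ 2L$ ample, so $(W,\Delta_W,L)$ fits the hypotheses of Conjecture~\ref{conjecturenonv}. Since $\dim W\leq 3$, this conjecture is known by the work of Kawamata and Ambro, giving $H^0(W,L)\neq 0$; lifting back via the surjection above produces the desired nonzero section of $-K_X$.

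I expect the main obstacle to be the vanishing step: as $(X,0)$ is only log canonical, plain Kawamata--Viehweg is not available, and one has to combine Ambro's vanishing for lc pairs with the tie-breaking trick (choosing $m$ and $\varepsilon$ carefully so that the resulting klt-modified pair isolates $W$ while preserving the positivity of $-2K_X-\varepsilon A$). Once this vanishing, together with the normality and subadjunction package for minimal lc centres of an lc pair, is in place, the hypothesis $\dim X\leq 5$ enters only through the bound $\dim W\leq 3$, placing us inside the known range of Conjecture~\ref{conjecturenonv}.
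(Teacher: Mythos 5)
Your proposal follows essentially the same route as the paper: reduce to the case where $X$ is strictly log canonical, take a minimal lc centre $W$ (of dimension at most three, since $X$ is normal), lift sections from $W$ using a Fujino-type extension theorem, and combine Kawamata subadjunction with effective nonvanishing on $W$. Two points of caution. First, a citation slip: for canonical Gorenstein Fano fourfolds of index one the results of Fujita/Iskovskikh/Floris you invoke do not apply (they concern index at least $n-3$, and Floris works in dimension five); the correct reference for $\dim X = 4$, resp.\ $\dim X \le 3$, is \cite[Thm 5.2]{Kaw00}, resp.\ \cite[Thm 5.1]{Kaw00}, which is what the paper uses. Second, and more importantly, your final step rests on the claim that Conjecture \ref{conjecturenonv} ``is known by the work of Kawamata and Ambro'' for $\dim W \le 3$. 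This is an overstatement: the conjecture is open in full generality in dimension three, which is exactly why the paper proves Proposition \ref{propositionklt} and Corollary \ref{corollarylogCY}. What rescues your argument is that here $(W,\Delta_W)$ is a klt \emph{log Fano} pair, since $-(K_W+\Delta_W)\sim_\Q -K_X|_W$ is ample, and for this special case three-dimensional nonvanishing is indeed known (\cite[Thm 5.1]{Kaw00}, or Corollary \ref{corollarylogCY} of the paper), while dimension at most two is \cite[Thm 3.1]{Kaw00}. With the justification of that step made precise, your argument is complete; your tie-breaking/Ambro--Fujino sketch of the surjectivity step is just an unpacking of \cite[Thm 2.2]{Fuj11b}, which the paper cites directly.
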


Since log-canonical singularities may not be rational,
the Riemann-Roch computation from the proof of Theorem \ref{theoremmain} can not work in this setting. An extension theorem of Fujino allows us to reduce the problem
to a klt pair of lower dimension.

\section{Basic facts}
We work over the complex numbers. For definitions and basic facts around the minimal model program we refer to \cite{KM98}. 

For brevity we will identify line bundles with their first Chern class, i.e. we will write $L^k$ instead of $c_1(L)^k$. We may observe that we have $c_1(T_X) = -K_X = i_X L$.

Let $X$ be a normal projective variety of dimension $n$ with at most klt singularities.
Since klt singularities are rational, we know that for any Cartier divisor $L$ on 
$X$ and any (partial) resolution of singularities $\holom{\mu}{X'}{X}$, one has
\[
h^j(X', \mu^* L) = h^j(X, L) \qquad \forall \ j \in \{ 0, \ldots, \dim X \}.
\]
In particular we can compute the Hilbert polynomial $\chi(X,tL)$ on some resolution via the Riemann-Roch formula. By the projection formula we obtain
\begin{equation}\label{eq:HRR1}
\chi(X,tL) = \frac{L^n}{n!}t^n + \frac{-K_X\cdot L^{n-1}}{2\cdot (n-1)!}t^{n-1} +r(t)
\end{equation}
where $r(t)$ is some degree $n-2$ polynomial, with $r(0)=\chi(\sO_X)$.

If $X$ is smooth in codimension two, then the second Chern class $c_2(X)$ is well-defined and we have (e.g \cite[Sect. 2]{Hoe12})

\begin{equation}\label{eq:HRR2}
\chi(X,tL) = \frac{L^n}{n!}t^n + \frac{-K_X\cdot L^{n-1}}{2\cdot (n-1)!}t^{n-1} + \frac{((-K_X)^2 +c_2(TX)) \cdot L^{n-2}}{12\cdot (n-2)!}t^{n-2} +\tilde{r}(t)
\end{equation}
where $\tilde{r}(t)$ is some degree $n-3$ polynomial, with $\tilde{r}(0)=\chi(\sO_X)$.

We prove a special case of Conjecture \ref{conjecturenonv} in dimension three.

\begin{proposition} \label{propositionklt}
Let $X$ be a normal projective threefold such that $-K_X$ is generically nef, i.e. for every nef Cartier divisor $H$ on $X$ one has 
\[
-K_X \cdot H^2 \geq 0.
\]
Suppose that there exists an effective $\R$-divisor $\Delta$
such that $(X, \Delta)$ is klt.
Let $D$ be a nef Cartier divisor such that $D-(K_X+\Delta)$ is nef and big. Then we have
\[
H^0(X, D) \neq 0.
\]
\end{proposition}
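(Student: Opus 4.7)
The approach is to convert the non-vanishing statement into a positivity statement for $\chi(X,D)$ via Kawamata--Viehweg vanishing, and then verify the resulting bound by a Hirzebruch--Riemann--Roch computation on a resolution.

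First, since $(X,\Delta)$ is klt and $D - (K_X + \Delta)$ is nef and big, Kawamata--Viehweg vanishing for klt pairs yields $H^i(X, D) = 0$ for every $i \ge 1$. Consequently $h^0(X, D) = \chi(X, D)$, and it suffices to prove $\chi(X, D) \ge 1$. Since klt singularities are rational, $\chi(X, D) = \chi(X', \mu^* D)$ for any resolution $\mu \colon X' \to X$.

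Applying Hirzebruch--Riemann--Roch on the smooth threefold $X'$ and using projection-formula identities such as $(\mu^* D)^2 \cdot E = 0$ for every $\mu$-exceptional divisor $E$, the top two Chern-number contributions descend intrinsically to $X$, yielding an expression
\[
\chi(X, D) \;=\; \frac{D^3}{6} + \frac{(-K_X) \cdot D^2}{4} + \Lambda(D) + \chi(\mathcal{O}_X),
\]
where $\Lambda(D)$ is linear in $D$ and built from $K_{X'}^2$ and $c_2(X')$. Nefness of $D$ gives $D^3 \ge 0$, and the generic nefness hypothesis on $-K_X$ applied with $H = D$ gives $(-K_X) \cdot D^2 \ge 0$. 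It remains to bound $\Lambda(D) + \chi(\mathcal{O}_X)$ from below by $1$; for this I would invoke a Miyaoka-type pseudo-effectivity inequality for $c_2$ combined with the standard bound $\chi(\mathcal{O}_X) \ge 1$ available for threefolds with canonical singularities whose anticanonical class is suitably positive (in particular when $X$ is rationally connected, via the MRC fibration).

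The main obstacle is controlling $\Lambda(D)$: the correction from $c_2(X')$ to $c_2(X)$ involves the exceptional divisors of $\mu$ non-trivially, and Miyaoka's pseudo-effectivity of $c_2$ is naturally stated for minimal models or non-uniruled varieties rather than arbitrary klt threefolds. The likely workaround is to first pass to a terminalization of $(X, \Delta)$, or to a $K_X$-minimal model should $-K_X$ fail to be big, and then to track how the intersections $D^3$, $(-K_X) \cdot D^2$ and the combination appearing in $\Lambda(D)$ transform under these birational modifications, before invoking a case distinction between the uniruled situation (handled via the MRC fibration) and the non-uniruled one (handled by generic semi-positivity of $\Omega_X$) to close the estimate.
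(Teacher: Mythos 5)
Your reduction to showing $\chi(X,D)>0$ via Kawamata--Viehweg vanishing and rationality of klt singularities matches the paper's starting point, but the argument stalls exactly where you say it does, and the proposed workarounds do not close it. The linear-in-$D$ term $\Lambda(D)$ involves $c_2$ of a resolution paired with $\mu^*D$, which is not intrinsic to $X$; moreover, in the main case of the proposition $X$ is uniruled (generic nefness of $-K_X$ forces, by the Miyaoka--Mori type criterion, either uniruledness or $K_X\equiv 0$), which is precisely the case where Miyaoka's pseudo-effectivity of $c_2$ is unavailable, and passing to a terminalization or a minimal model does not make the $c_2\cdot D$ contribution sign-definite. So the key estimate $\Lambda(D)+\chi(\mathcal{O}_X)\ge 1$ is not established, and this is a genuine gap, not a routine verification.

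The paper's proof avoids controlling $\Lambda(D)$ altogether by a parity trick. After first invoking \cite[Thm 3.1]{Kaw00} to reduce to the case where $D$ is nef and big (a step your write-up skips: the hypothesis only gives $D$ nef) and passing to a $\Q$-factorial modification, one writes $p(t)=\chi(X,tD)=\frac{D^3}{6}(t^3+at^2+bt+c)$ and observes that $\chi(X,-D)=-h^3(X,-D)\le 0$, since $h^j(X,-D)=0$ for $j\le 2$ when $D$ is nef and big and $X$ has rational singularities. Hence
\[
\chi(X,D)\;\ge\;\chi(X,D)+\chi(X,-D)\;=\;p(1)+p(-1)\;=\;\frac{D^3}{3}(a+c),
\]
which cancels both the cubic term and the troublesome linear term $b$ containing $c_2$. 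One then only needs $a\ge 0$, i.e. $-K_X\cdot D^2\ge 0$ (your generic nefness hypothesis applied with $H=D$), and $c>0$, i.e. $\chi(\mathcal{O}_X)>0$. The latter is obtained not from rational connectedness (which is not available here) but from the dichotomy above: the case $K_X\equiv 0$ is handled separately (via \cite[Prop. 4.1]{Kaw00} in the canonical case, or by reducing to the uniruled case through a canonical modification), while in the uniruled case $h^3(\mathcal{O}_X)=0$, one may assume $h^1(\mathcal{O}_X)=0$ by \cite[Cor. 4]{Xie09}, and then $\chi(\mathcal{O}_X)=1+h^2(\mathcal{O}_X)\ge 1$. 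Replacing your attempted lower bound on $\Lambda(D)$ by this cancellation is the missing idea.
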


\begin{proof}
By  \cite[Thm 3.1]{Kaw00} we can assume that $D$ is nef and big.
By \cite[Cor. 1.37]{Kol13} we know that $X$ has a $\Q$-factorial modification,
by the projection formula we can replace $X$ without loss of generality
by a $\Q$-factorial modification.

Since $-K_X$ is generically nef, we know by 
\cite[Thm.3.10]{Deb01} that the variety $X$ is either uniruled or $K_X$ is numerically trivial. If $K_X$ is numerically trivial and $X$ has canonical singularities, then $D-K_X \equiv D$ is nef and big and we conclude  by \cite[Prop. 4.1]{Kaw00}.
If $K_X$ is numerically trivial and $X$ does not have canonical singularities, the canonical divisor $K_{X'}$ of a canonical modification $X' \rightarrow X$ \cite[Thm 1.31]{Kol13} is numerically equivalent to a non-zero anti-effective $\Q$-divisor. In particular $X'$ (and hence $X$) is uniruled by \cite[Thm 3.10]{Deb01}.

Suppose now that $X$ is uniruled. Since $X$ has rational singularities we have  $h^i(X, \sO_X) = h^i(X', \sO_{X'})$ for any $i \in \{ 0, \ldots, 3 \}$ and any resolution of singularities $X' \rightarrow X$. Since $X'$ is uniruled, we obtain
$h^3(X, \sO_X) = h^3(X', \sO_{X'})=0$. 
By \cite[Cor. 4]{Xie09} we can also suppose without loss of generality that $h^1(X, \sO_X)=0$. Thus we can suppose that $\chi(X, \sO_X) \geq 1$.

Since $D-(K_X+\Delta)$ is nef and big, we have 
\[
h^0(X, D)=\chi(X, D)
\]
by Kawamata-Viehweg vanishing.  
Consider  the Hilbert polynomial
\[
\chi(X, tD) = \frac{D^3}{3!} (t^3+at^2+bt+c).
\]
Since $D$ is nef and big and $X$ has rational singularities, we have 
$h^j(X, -D)=0$ for all $0 \leq j \leq 2$.
Thus we have $\chi(X, -D)=-h^3(X, -D) \leq 0$ and
\[
\chi(X, D) \geq \chi(X, D) + \chi(X, -D) = \frac{D^3}{3} (a+c).
\]
Since $D^3>0$ we are left to show that $a+c>0$.

Comparing coefficients with \eqref{eq:HRR1} we get 
\[
a \frac{D^3}{3!}= \frac{-K_X \cdot D^2}{4}
\qquad \mbox{and} \qquad 
c \frac{D^3}{3!}=\chi(X, \sO_X).
\] 
By assumption $-K_X \cdot D^2 \geq 0$, so we have $a \geq 0$.
Since $\chi(X, \sO_X)>0$ we have $c>0$. Thus we finally obtain
$\chi(X, D) \geq \frac{D^3}{3} (a+c) > 0.$
\end{proof}

\begin{corollary} \label{corollarylogCY}
Let $X$ be a normal projective threefold. 
Suppose that there exists an effective $\R$-divisor $\Delta$
such that $(X, \Delta)$ is klt and $-(K_X+\Delta)$ is pseudoeffective.
Let $D$ be a nef and big Cartier divisor. Then we have
\[
H^0(X, D) \neq 0.
\]
\end{corollary}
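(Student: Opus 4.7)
The plan is to reduce to Proposition \ref{propositionklt} after replacing $\Delta$ by a suitable klt perturbation $\Delta'$. Two hypotheses need to be verified: generic nefness of $-K_X$, and the existence of an effective $\R$-divisor $\Delta'$ such that $(X,\Delta')$ is klt and $D-(K_X+\Delta')$ is nef and big.

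For the first, $-K_X = -(K_X+\Delta)+\Delta$ is pseudoeffective as the sum of a pseudoeffective and an effective class. Since the intersection of a pseudoeffective class with $H^2$ for a nef Cartier $H$ is nonnegative on the threefold $X$ (approximate by effective $\R$-divisors and use $(H|_{D_i})^2 \ge 0$ for each prime component $D_i$), we obtain $-K_X\cdot H^2\ge 0$.

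The construction of $\Delta'$ is the main step. The naive choice $\Delta'=\Delta$ only yields that $B:=D-(K_X+\Delta)$ is big (as the sum of a big and a pseudoeffective class), not necessarily nef. To fix this I would perturb $\Delta$. First, approximating its coefficients by close rationals, I may assume $\Delta$ is a $\Q$-divisor, since klt and bigness of $B$ are both open conditions. Now $B$ is $\Q$-Cartier; fix an ample Cartier $A$ and apply Kodaira's lemma to produce a rational $\varepsilon>0$ with $B-\varepsilon A$ big, so that $|m(B-\varepsilon A)|$ has positive dimension for $m$ sufficiently divisible and large. By a standard Bertini-type argument for log pairs (cf.\ \cite[Lem.\ 2.45]{KM98}, applied on a log resolution to separate fixed and movable parts), a general element $E$ of this linear system yields a klt pair $(X,\Delta+\tfrac{1}{m}E)$ for $m\gg 0$. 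Setting $\Delta':=\Delta+\tfrac{1}{m}E$, we obtain
\[
D-(K_X+\Delta') \sim_{\Q} B - \tfrac{1}{m}E \sim_{\Q} \varepsilon A,
\]
which is ample, hence nef and big.

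The main technical obstacle is this perturbation: ensuring that the general element of $|m(B-\varepsilon A)|$, divided by $m$, has small enough multiplicities against all log-discrepancies of $(X,\Delta)$. Once this is arranged, Proposition \ref{propositionklt} applied to $(X,\Delta')$ and the nef Cartier divisor $D$ yields $H^0(X,D)\ne 0$.
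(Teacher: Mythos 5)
You are right to worry that the hypotheses of the corollary do not literally give the hypothesis ``$D-(K_X+\Delta)$ nef and big'' of Proposition \ref{propositionklt}: a nef and big class plus a pseudoeffective class is big but need not be nef. (The paper's own proof is silent on this point and simply invokes the proposition with the same $\Delta$; note that in the only places the corollary is applied, $-(K_W+\Delta_W)$ is $\Q$-linearly equivalent to a nonnegative multiple of the ample class $-K_X|_W$, so there $D-(K_W+\Delta_W)$ is actually ample and the proposition applies verbatim.) The problem is that your repair does not work. The Bertini-type lemma you invoke requires the linear system to be basepoint free (or at least to have very mild base locus): genericity of $E\in|m(B-\varepsilon A)|$ only controls the movable part, while $E$ always contains the fixed divisor $F_m$ of the system, and $\tfrac1m F_m$ does not become small as $m\to\infty$ --- it converges to the negative part $N_\sigma(B-\varepsilon A)$ of the divisorial Zariski decomposition. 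More is true: \emph{every} effective $\R$-divisor numerically equivalent to $B-\varepsilon A$ dominates $N_\sigma(B-\varepsilon A)$, so if some component of $N_\sigma(B-\varepsilon A)$ has coefficient $\ge 1$, no choice of $E$ (generic or not) makes $(X,\Delta+\tfrac1mE)$ klt. Since $B=D-(K_X+\Delta)$ with $D$ nef, the only control available is $N_\sigma(B)\le N_\sigma(-(K_X+\Delta))$, and the corollary's hypothesis ``pseudoeffective'' places no bound on the latter; one can arrange $-(K_X+\Delta)\equiv c\Gamma$ for a rigid prime divisor $\Gamma$ and $c\gg1$, in which case the desired klt perturbation $\Delta'$ simply does not exist.

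So the gap is not merely in the execution of the Bertini step but in the strategy: the corollary cannot in general be reduced to the \emph{statement} of Proposition \ref{propositionklt} by modifying $\Delta$. A correct treatment has to go through the \emph{proof} of the proposition and ask what the hypothesis ``$D-(K_X+\Delta)$ nef and big'' is used for --- namely the Kawamata--Viehweg vanishing $h^i(X,D)=0$ for $i>0$ --- and either establish that vanishing under the corollary's hypotheses or weaken the corollary to the case where $-(K_X+\Delta)$ is nef (which is all the paper ever uses, and for which the two-line reduction is honest). Your first step, the generic nefness of $-K_X$, is fine and coincides with the paper's argument.
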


\begin{proof}
By assumption $-K_X$ is numerically equivalent to the sum of a pseudoeffective and an effective divisor, so it is generically nef. Conclude with Proposition \ref{propositionklt}.
\end{proof}

\section{Proof of the main theorem}

\begin{proof}[Proof of the first statement of Theorem \ref{theoremmain}.]
By \cite[Cor 1.4.4]{BCHM06} we have a birational map $f: X' \rightarrow X$ such that $f^* (K_X) = K_{X'}$ and $X'$ has terminal Gorenstein singularities.
Since $h^0(X, -K_X)=h^0(X', -K_{X'})$ we will assume from now that $X$ is terminal and $-K_X$ is nef and big.

If $L$ is a nef and big divisor on the terminal variety $X$ we have, by Kawamata-Viehweg vanishing, that $H^p (X, L) = 0$ for $p >0$. 
In particular one has $\chi (X, L) = h^0 (X,L)$.

Suppose now that $-K_X=i_X L$, so $L$ is a fundamental divisor on $X$.
By Serre duality the Hilbert polynomial $p(t) = \chi(X, tL)$ satisfies $p(t)= -p(-t-i_X)$. We know that $p(t)$ is given by \eqref{eq:HRR2}, so by comparing coefficients of $p(t)$ and $-p(-t-i_X)$ we obtain that:
\begin{equation} \label{equationr}
\tilde{r}(t) = \frac{i_X \cdot L^3 \cdot c_2(TX)}{2\cdot 4!}t^2 + \left(\frac{2}{i_X}+ \frac{i_X^2 \cdot L^3 \cdot c_2(TX)}{6\cdot 4!} - \frac{i_X^4 \cdot L^5}{ 6!}\right)t + 1.
\end{equation}
Therefore we have obtained the precise formula for the Hilbert polynomial of a terminal Gorenstein weak Fano fivefold.  We are ready to conclude by evaluating the obtained polynomial for $t=i_X$:
\begin{equation} \label{eq:HRR3}
\chi(X,-K_X) = p(i_X)= \frac{(-K_X)^5}{4!} + \frac{c_2(TX) \cdot (-K_X)^3}{4!} + 3
\end{equation}
and since $(-K_X)^5>0$ and $c_2(TX) (-K_X)^3 \ge 0$ by \cite[Cor. 0.5]{Ou17} we obtain $h^0(X,-K_X) \ge 4$.
\end{proof}

\begin{remark*}
It is interesting to observe that \eqref{equationr}  
does not imply the nonvanishing for fundamental divisor in the case $i_X \ge 4$. For all values of $i_X$ we have:
\begin{equation} \label{eq:evaluation}
p(1) = \frac{L^5}{6!} \left( 6 + 15i_X +10i_X^2 - i_X^4 \right) + \frac{c_2(TX)\cdot L^3}{6 \cdot 4!} \left( 2 + 3i_X + i_X^2 \right) + \frac{2}{i_X} + 1,
\end{equation}
but the first term is negative for $i_X \ge 4$. However if we restrict ourselves to smooth fivefolds with $b_2 =1$ then the tangent bundle is stable \cite[Thm 2]{Hw} and we may use Bogomolov-Gieseker inequality which in our case has the form:
\begin{displaymath}
10 c_2(TX)\cdot L^3 \ge  4 c_1(TX)^2 \cdot L^3
\end{displaymath}
to obtain:
\begin{equation}
p(1) \ge \frac{L^5}{6!} \left( 6 +15i_X +14i_X^2 + 6i_X^3 + i_X^4 \right) + \frac{2}{i_X} + 1 \ge 2
\end{equation}
This interplay between stability and effective nonvanishing also appears in \cite{Flo13, Liu17}.            \end{remark*}

\begin{proof}[Proof of the second statement in Theorem \ref{theoremmain}.] 
We follow arguments from \cite[Sect. 5]{Heu16}.
By inversion of adjunction the statement is equivalent to proving that the pair $(X, D)$ is plt.
Arguing by contradiction we suppose that there exists a $0< c \leq 1$ such that the pair $(X, cD)$ 
is properly lc and, in the case $c=1$, not plt. 
By \cite[Lemma 5.1]{Amb99} there exists
a minimal lc centre  $W \subset X$ for the pair $(X, cD)$
that is contained in the base locus of $-K_X$. Thus the restriction map
\[
H^0(X, -K_X) \rightarrow H^0(W, -K_X|_W)
\]
is zero. On the other hand the divisor class
\[
-K_X - (K_X+cD) = (2-c) (-K_X)
\]
is ample, so by \cite[Thm 2.2]{Fuj11b} the restriction map is surjective. Thus we obtain that
\[
H^0(W, -K_X|_W) = 0.
\]
Since $W$ is a minimal lc centre, there exists by Kawamata subadjunction an effective $\Q$-divisor $\Delta_W$ such that
$(W, \Delta_W)$ is klt and
\[
K_W + \Delta_W \sim_\Q (K_X+cD)|_W \sim_\Q (1-c) K_X|_W.
\]
Since $c \leq 1$ we see that $(W, \Delta_W)$ is log Fano or log Calabi-Yau. 

Now we use our assumption: since $D$ is reduced, none of its irreducible components is a minimal lc centre of $(X, cD)$ with $c \leq 1$. Thus $W$ has dimension at most three. 
If $\dim W=2$ we  have $H^0(W, -K_X|_W) \neq 0$ by \cite[Thm 3.1]{Kaw00}, a contradiction. If $\dim W=3$, we obtain the same contradiction
by Corollary \ref{corollarylogCY} (see also
\cite[Thm 5.1]{Kaw00} for the log Fano case).
\end{proof}

\begin{remark*}
In the setting of Theorem \ref{theoremmain} the assumption that $D$ is reduced is equivalent to assuming that the fixed part of the anticanonical system 
is reduced. If $X$ is factorial with $\rho(X)=1$ the fixed part is always empty:
since $h^0(X, -K_X)>1$ the case $\mbox{Pic}(X) \simeq \Z K_X$ is elementary, 
if $i(X) \geq 2$ we apply \cite[Thm 1.1]{Flo13}.
\end{remark*}

\begin{proof}[Proof of the Proposition \ref{propositionmain}.] 
If $X$ has canonical singularities we conclude
with Theorem \ref{theoremmain} for $\dim X = 5$, \cite[Thm 5.2]{Kaw00} for $\dim X = 4$ and \cite[Thm 5.1]{Kaw00} for $\dim X \le 3$.
Thus, since $X$ is Gorenstein, we can suppose without
loss of generality that the non-klt locus is not empty.
In particular there exists a minimal lc centre $W$
for the pair $(X, 0)$.
The divisor class
\[
-K_X - (K_X+0) = -2K_X
\]
is ample, so by \cite[Thm 2.2]{Fuj11b} the restriction map
\[
H^0(X, -K_X) \rightarrow H^0(W, -K_X|_W)
\]
is surjective. Thus it is sufficient to show that
$H^0(W, -K_X|_W) \neq 0$.

Since $W$ is a minimal lc centre, there exists by Kawamata subadjunction an effective $\Q$-divisor $\Delta_W$ such that
$(W, \Delta_W)$ is klt and
\[
K_W + \Delta_W \sim_\Q K_X|_W.
\]
Thus $(W, \Delta_W)$ is log Fano.

Now we use our assumption: 
since $X$ is normal of dimension at most five,
the lc centre $W$ has dimension at most three. 
If $\dim W \leq 2$ we  have $H^0(W, -K_X|_W) \neq 0$ by \cite[Thm 3.1]{Kaw00}.
If $\dim W=3$, we have $H^0(W, -K_X|_W) \neq 0$ by Corollary \ref{corollarylogCY}.
\end{proof}

\subsection*{Acknowledgements} 
The first-named author was supported by the Institut Universitaire de France and the A.N.R. project
project Foliage (ANR-16-CE40-0008). 	
The second-named author was supported by Kartezjusz programme for PhD students at the University of Warsaw. Moreover he would like to thank his advisor, Jarosław Buczyński, for guidance and patience and Massimiliano Mella for the informative talk during his stay in Warsaw. Both authors would also like to thank anonymous referee for helpful remarks.

\bibliographystyle{alpha}
\bibliography{biblio}
\end{document}